\DeclareMathAlphabet{\dutchcal}{U}{dutchcal}{m}{n}
\newcolumntype{L}[1]{>{\raggedright\let\newline\\\arraybackslash\hspace{0pt}}m{#1}}
\newcolumntype{C}[1]{>{\centering\let\newline\\\arraybackslash\hspace{0pt}}m{#1}}
\newcolumntype{R}[1]{>{\raggedleft\let\newline\\\arraybackslash\hspace{0pt}}m{#1}}
\newtheoremstyle{theoremstyle}
{10pt}      
{5pt}       
{\itshape}  
{}          
{\bfseries} 
{}         
{ }      
{}          
\newtheoremstyle{algorithmstyle}
{10pt}      
{5pt}       
{}  
{}          
{\bfseries} 
{}         
{ }      
{}          
\newtheoremstyle{examplestyle}
{10pt}      
{5pt}       
{}          
{}          
{\bfseries} 
{}         
{ }      
{}          
\newtheorem*{rep@theorem}{\rep@title}
\newcommand{\newreptheorem}[2]{%
\newenvironment{rep#1}[1]{%
 \def\rep@title{#2 \ref{##1}}%
 \begin{rep@theorem}}%
 {\end{rep@theorem}}}
\newcommand{\subalign}[1]{%
  \vcenter{%
    \Let@ \restore@math@cr \default@tag
    \baselineskip\fontdimen10 \scriptfont\tw@
    \advance\baselineskip\fontdimen12 \scriptfont\tw@
    \lineskip\thr@@\fontdimen8 \scriptfont\thr@@
    \lineskiplimit\lineskip
    \ialign{\hfil$\m@th\scriptstyle##$&$\m@th\scriptstyle{}##$\hfil\crcr
      #1\crcr
    }%
  }%
}
\theoremstyle{theoremstyle}
\newtheorem{theorem}{Theorem}[section]
\newtheorem{lemma}[theorem]{Lemma}
\theoremstyle{examplestyle}
\newtheorem*{notation*}{Notation}
\newtheorem{convention}[theorem]{Convention}
\newtheorem{question}[theorem]{Question}
\theoremstyle{algorithmstyle}
\newcommand{\RR}{\mathbb{R}}
\newcommand{\ZZ}{\mathbb{Z}}
\newcommand{\suchthat}{\;\ifnum\currentgrouptype=16 \middle\fi|\;}
\DeclareMathOperator{\Conv}{Conv}
\DeclareMathOperator{\MV}{MV}
\DeclareMathOperator{\Span}{Span}
\DeclareMathOperator{\trop}{trop}
\DeclareMathOperator{\val}{val}
\newcommand\restr[2]{{\left.\kern-\nulldelimiterspace #1 \right|_{#2}}}
\newcommand{\oset}[3][0ex]{%
  \mathrel{\mathop{#3}\limits^{
    \vbox to#1{\kern-2\ex@
    \hbox{$\scriptstyle#2$}\vss}}}}
\newcommand{\uset}[3][0ex]{%
  \mathrel{\mathop{#3}\limits_{
    \vbox to#1{\kern-7\ex@
    \hbox{$\scriptstyle#2$}\vss}}}}
\newcommand{\customlabel}[2]{%
   \protected@write \@auxout {}{\string \newlabel {#1}{{#2}{\thepage}{#2}{#1}{}} }%
   \hypertarget{#1}{#2}%
}
\begin{document}

\title[]{On intersections and stable intersections\\ of tropical hypersurfaces}

\author{Yue Ren}
\address{Department of Mathematics, Durham University, United Kingdom.
}
\email{yue.ren2@durham.ac.uk}
\urladdr{https://yueren.de}

\subjclass[2020]{14T10, 14T15}

\date{\today}

\keywords{tropical geometry, tropical hypersurfaces, stable intersections.}

\begin{abstract}
  We prove that every connected component of an intersection of tropical hypersurfaces contains a point of their stable intersection unless their stable intersection is empty. This is done by studying algebraic hypersurfaces that tropicalize to them and the tropicalization of their intersection.
\end{abstract}

\thanks{Yue Ren is supported by UK Research and Innovation under the Future Leaders Fellowship programme (MR/S034463/2).}

\maketitle

\section{Introduction}

Tropical varieties are commonly described as combinatorial shadows of algebraic varieties. The tropicalization of an algebraic variety shares many common properties with its algebraic counterparts, such as its dimension. This is prominently used in the finiteness proof of central configurations in the four and five body problem of celestial mechanics \cite{HamptonMoeckel06,HamptonJensen11}, as well as finiteness proofs of many other central configurations such as those with fixed subconfigurations~\cite{HamptonJensen15} or equilateral chains~\cite{DengHampton23}. In these proofs, the authors exploit the fact that their central configurations satisfy certain algebraic equations and thus lie on an algebraic variety. They then show that the tropicalization of the algebraic variety is zero-dimensional by intersecting the tropical hypersurfaces of carefully chosen equations and eliminating all resulting positive-dimensional polyhedra.

Computing an intersection of tropical hypersurfaces can however be an incredibly challenging task. Tropical hypersurfaces may have several maximal polyhedra and the computation requires intersecting all combinations thereof. While parallelisation and a clever choice of intersection order can lead to significant improvements in performance \cite{JSV17}, there is no general way to avoid the exponential number of intersections required. This circumstance is especially unsatisfying when one expects the final intersection to be very small, such as in all the cases above.

One alternative would be to compute the intersection ``bottom-up'': if one can identify a point in every connected component of the intersection, the rest can be obtained using a traversal as in the computation of tropical varieties \cite{BJST07,MR20}. When the number of hypersurfaces equals the ambient dimension, a natural candidate for such a set of starting points is their stable intersection. This is in part due to similarity of definitions, see \cref{fig:stableIntersection}, but also because it can be computed quickly using techniques such as tropical homotopy continuation \cite{Jensen16}. However, there is no known proof that every connected component of the intersection contains a point in the stable intersection. This paper aims to close that gap:

\begin{reptheorem}{thm:main}
  Let $\Sigma_1,\dots\Sigma_k$ be tropical hypersurfaces in $\RR^n$ with a non-empty stable intersection $\bigwedge_{i=1}^k\Sigma_i\neq\emptyset$. Then every connected component of their intersection $\bigcap_{i=1}^k|\Sigma_i|$ contains a point in the support of their stable intersection $|\bigwedge_{i=1}^k\Sigma_i|$.
\end{reptheorem}

Despite the obvious combinatorial nature of the statement, we were unable to proof the statement using combinatorics alone. Our proof relies on an algebro-geometric result by Josephine Yu on generic polynomials generating prime ideals \cite{Yu16}, and the properties of tropicalizations of irreducible varieties.

\subsection*{Acknowledgements} The work was partially done during Collaborate@ICERM ``\emph{Numerical Algebraic Geometry and Tropical Geometry}'' with Tianran Chen (Auburn), Paul Helminck (Durham), Anders Jensen (Aarhus), Anton Leykin (Georgia Tech) and Josephine Yu (Georgia Tech). The author would like to thank them for helpful discussions, and the institute for its hospitality.

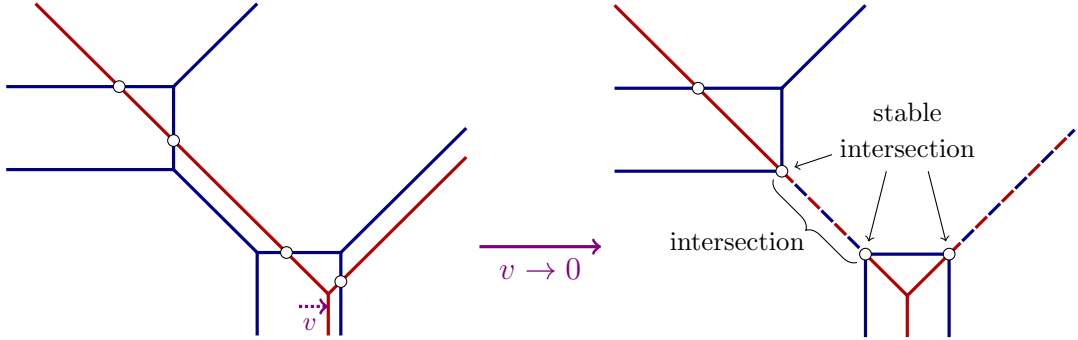
\begin{figure}
  \centering

  \begin{tikzpicture}
    \node at (-4,0)
    {
      \begin{tikzpicture}[scale=1.1]
        \draw[blue!50!black,very thick]
        (-1,2) -- ++(1,1)
        (-1,2) -- ++(-2,0)
        (-1,2) -- (-1,1)
        (-1,1) -- ++(-2,0)
        (-1,1) -- (0,0)
        (0,0) -- ++(0,-1)
        (0,0) -- (1,0)
        (1,0) -- ++(0,-1)
        (1,0) -- ++(1.5,1.5);

        \draw[red!70!black,very thick]
        (0.85,-0.5) -- ++(-3.5,3.5)
        (0.85,-0.5) -- ++(0,-0.5)
        (0.85,-0.5) -- ++(1.65,1.65);

        \draw[fill=white]
        (-1.65,2) circle (2pt)
        (-1,1.35) circle (2pt)
        (0.35,0) circle (2pt)
        (1,-0.35) circle (2pt);

        \draw[->,densely dotted,very thick,violet]
        (0.5,-0.65) -- node[below,font=\footnotesize,xshift=-0.5mm] {$v$} (0.85,-0.65);
      \end{tikzpicture}
    };
    \draw[->,very thick,violet] (-0.8,-1) -- node[below] {$v\rightarrow 0$} (0.8,-1);
    \node at (4,0)
    {
      \begin{tikzpicture}[scale=1.1]
        \draw[blue!50!black,very thick]
        (-1,2) -- ++(1,1)
        (-1,2) -- ++(-2,0)
        (-1,2) -- (-1,1)
        (-1,1) -- ++(-2,0)
        (0,0) -- ++(0,-1)
        (0,0) -- (1,0)
        (1,0) -- ++(0,-1);

        \draw[red!70!black,very thick]
        (-1,1) -- ++(-2,2)
        (0,0) -- (0.5,-0.5)
        (0.5,-0.5) -- ++(0,-0.5)
        (0.5,-0.5) -- (1,0);

        \draw[blue!50!black,very thick,dash pattern= on 6pt off 8pt,dash phase=7pt]
        (-1,1) -- (0,0)
        (1,0) -- ++(1.5,1.5);
        \draw[red!70!black,very thick,dash pattern= on 6pt off 8pt]
        (-1,1) -- (0,0)
        (1,0) -- ++(1.5,1.5);

        \draw[fill=white]
        (-2,2) circle (2pt)
        (-1,1) circle (2pt)
        (0,0) circle (2pt)
        (1,0) circle (2pt);
        \draw[decorate,decoration={brace,amplitude=5pt,mirror}] ($(-1,1)+(-0.1,-0.1)$) -- node[font=\footnotesize,anchor=north east] {intersection} ($(0,0)+(-0.1,-0.1)$);
        \node[font=\footnotesize,text width=20mm,align=center] at (0.5,1.5) {stable\\ intersection};
        \draw[<-,shorten <=5pt] (0,0) -- ++(0.25,1);
        \draw[<-,shorten <=5pt] (1,0) -- ++(-0.25,1);
        \draw[<-,shorten <=5pt] (-1,1) -- ++(0.6,0.2);
      \end{tikzpicture}
    };
  \end{tikzpicture}\vspace{-3mm}
  \caption{The intersection and stable intersection of two tropical plane curves. Notice how every connected component of the intersection contains a point in the stable intersection.}
  \label{fig:stableIntersection}
\end{figure}


\section{Background}

In this article, we closely follow the notation of \cite{MaclaganSturmfels15}. In particular:

\begin{convention}
  For the remainder of the paper, we will fix an algebraically closed field $K$ of characteristic $0$ with non-trivial valuation $\val\colon K^\ast\rightarrow\RR$ and an element $t\in K^\ast$ with $\val(t)=1$. Let $K[x^{\pm1}]\coloneqq K[x_1^{\pm1},\dots,x_n^{\pm1}]$ be a multivariate (Laurent) polynomial ring thereover.
\end{convention}

For the sake of brevity, we will abbreviate ``pure, weighted, balanced polyhedral complex'' by ``balanced polyhedral complex'', and we will consider tropical hypersurfaces as balanced polyhedral complexes instead of supports thereof. Additionally, we will denote the stable intersection by ``$\wedge$'' for better inline formatting.

We will further assume some familiarity with the basic concepts in Sections 2 and 3 of \cite{MaclaganSturmfels15}, such as the duality of tropical hypersurfaces and regular subdivisions of Newton polytopes, tropicalizations of algebraic varieties, stable intersections of balanced polyhedral complexes, and mixed volumes of polytopes. In particular, we will be relying heavily on the following results:\vspace{2mm}

\begin{theorem}[{\cite[Theorem 3.3.5]{MaclaganSturmfels15}} Structure Theorem for Tropical Varieties]\label{thm:StructureTheorem}
  Let $X$ be an irreducible variety in $(K^\ast)^n$ of dimension $d$. Then $\trop(X)$ is the support of a balanced polyhedral complex in $\RR^n$ of dimension $d$ that is connected in codimension $1$.
\end{theorem}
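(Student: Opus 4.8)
The plan is to deduce all three assertions from a systematic study of the initial ideals $\initial_w(I)$ of the prime ideal $I = I(X) \subseteq K[x^{\pm1}]$, using the Fundamental Theorem of tropical geometry to identify $\trop(X)$ with the locus $\{w \in \RR^n : \initial_w(I) \text{ contains no monomial}\}$. First I would recall that $w \mapsto \initial_w(I)$ is constant on the relative interior of each cell of the Gröbner complex $\Sigma(I)$ of the homogenised ideal; consequently $\trop(X)$ is exactly the support of the subcomplex formed by those cells $\sigma$ for which $\initial_w(I)$ is monomial-free when $w \in \Relint\sigma$. This settles the polyhedral-complex structure and simultaneously supplies the weights: to each maximal cell $\sigma$ I attach $m_\sigma = \sum_P \mult_P(\initial_w(I))$, the sum over the minimal primes $P$ of the monomial-free initial ideal, which is well defined precisely because that ideal depends only on $\Relint\sigma$.

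Next, for the dimension and purity claims I would exploit that Gröbner degeneration preserves the Hilbert function, so that $\dim V(\initial_w(I)) = \dim X = d$ for every $w \in \trop(X)$. Since $\initial_w(I)$ is homogeneous for the grading by the lineality directions of the cell $\sigma \ni w$, its zero set is invariant under the subtorus of $(K^\ast)^n$ with character lattice $\Span(\sigma)\cap\ZZ^n$; comparing the dimension of that torus with the ambient $d$ forces every inclusion-maximal $\sigma$ to have dimension exactly $d$. Equidimensionality of $X$, a consequence of irreducibility, is what excludes maximal cells of dimension below $d$ and yields purity.

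The balancing condition is then checked one codimension-one cell $\tau$ at a time. Here I would localise: for $w \in \Relint\tau$ the complex $\trop(X)$ agrees near $w$ with the translate $w + \trop(V(\initial_w(I)))$, and passing to the quotient $\RR^n/\Span(\tau)$ reduces the claim to balancing a one-dimensional fan in $\RR^{\,n-(d-1)}$, i.e. a tropical curve whose rays are indexed by the maximal cells $\sigma \supset \tau$. By the multiplicity computation for the toric initial ideals sitting over each ray, the weights $m_\sigma$ defined above coincide with the weights of that tropical curve, so that $\sum_{\sigma \supset \tau} m_\sigma u_{\sigma/\tau} \in \Span(\tau)$ becomes the classical balancing of a weighted curve on a torus. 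The delicate point is matching the algebraically natural multiplicity with the combinatorial balancing weight, which I would verify through the explicit structure of these initial ideals.

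The last assertion, connectivity in codimension $1$, is the one I expect to be the main obstacle: it is the only genuinely global property and the only place where irreducibility of $X$ enters in an essential, non-local way. Here I must show that the graph on the maximal cells, with an edge for each shared codimension-one face, is connected. My plan is to argue by contradiction and to reduce the dimension by slicing. If $\trop(X)$ split into subcomplexes meeting only in cells of codimension $\geq 2$, then intersecting with a generic codimension-$(d-1)$ affine subspace $L$ would miss that codimension-$\geq 2$ locus, so the slice $\trop(X)\cap L$ would be a disconnected one-dimensional complex; via a tropical Bertini argument this slice is the tropicalization of an irreducible curve $X\cap\tilde L$, whose tropicalization is connected (and for a curve connectivity in codimension one is just connectivity), giving a contradiction. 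Controlling how tropicalization interacts with generic intersection, and preserving irreducibility of the slices, is exactly where the difficulty concentrates, and it is what makes a purely combinatorial proof elusive; the most robust alternative is to invoke the connectedness of the initial degenerations of an irreducible variety and to induct on $\codim X$.
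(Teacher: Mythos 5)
First, a point of order: the paper does not prove this statement at all --- it is quoted as background, with proof deferred to \cite{MaclaganSturmfels15}, and it is then used as a black box in the proof of \cref{lem:stableConspiracy}. So your proposal can only be compared with the standard textbook proof. For the first three assertions your outline follows that proof: the polyhedral structure from constancy of $w\mapsto\initial_w(I)$ on the cells of the Gr\"obner complex, multiplicities from the minimal primes of the monomial-free initial ideals, and balancing by passing to the star of a codimension-one cell. One caveat: your purity argument is circular. ``Equidimensionality of $X$, a consequence of irreducibility, \ldots excludes maximal cells of dimension below $d$'' restates the claim rather than proving it; the torus-invariance of $V(\initial_w(I))$ only gives the upper bound $\dim\sigma\le d$ for a cell $\sigma$, and the matching lower bound for \emph{every} maximal cell is the content of the Bieri--Groves theorem, which needs its own (projection or induction) argument.

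The genuine gap is in the connectivity step, and it is precisely the phenomenon this paper is about. You pass from the slice $\trop(X)\cap L$ to the tropicalization of an algebraic slice $X\cap\tilde L$, but tropicalization does not commute with intersection: in general $\trop(X\cap\tilde L)\subsetneq\trop(X)\cap L$. The correct statement (\cref{thm:stableTropicalization}) is that the \emph{stable} intersection $\trop(X)\wedge L$ equals $\trop(X\cap t\cdot\tilde L)$ for generic translates $t$, and the discrepancy between intersections and stable intersections is exactly what \cref{thm:main} is trying to control --- so an argument of this shape comes dangerously close to assuming what this paper sets out to prove. Worse, for $L$ an affine subspace with rational slope the preimage $\tilde L$ must be a coset of a subtorus, and Bertini-type irreducibility \emph{fails} for generic torus-coset sections: for the irreducible surface $X=V(z-x^2y^2)\subseteq (K^\ast)^3$, the slice $X\cap\{z=c\}$ is the union of the two cosets $xy=\pm\sqrt{c}$ for \emph{every} $c$, so your appeal to ``a tropical Bertini argument'' cannot be the classical one. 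The actual tropical Bertini theorem of Maclagan--Yu \cite{MaclaganYu21} circumvents this by slicing with hyperplanes whose tropicalizations are tropical hyperplanes (not affine subspaces), and it is a much later and harder result than the theorem you are proving --- indeed it is proved \emph{using} the Structure Theorem. Finally, even granting irreducibility of the sliced curve, its tropicalization being connected is itself an instance of the theorem under proof, so your induction lacks an independently established base case. The proofs in the literature avoid all of this by reducing to the hypersurface case via generic monomial projections, where connectivity through codimension one is the combinatorial fact that the facet-ridge graph dual to a regular subdivision of the Newton polytope is connected.
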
\vspace{2mm}

\begin{theorem}[{\cite[Theorem 3.6.1]{MaclaganSturmfels15}}]\label{thm:stableTropicalization}
  Let $\Sigma_1,\Sigma_2$ be two balanced polyhedral complex in $\RR^n$ whose support is the tropicalization of two varieties $X_1,X_2\subseteq (K^\ast)^n$. Then there is a Zariski dense subset $U\subseteq (K^\ast)^n$ consisting of elements with component-wise valuation $0$ such that
  \[ |\Sigma_1\wedge\Sigma_2| = \trop(X_1\cap t\cdot X_2) \qquad \text{for all }t\in U. \]
\end{theorem}\vspace{2mm}

\begin{theorem}[{\cite[Theorem 3.6.10]{MaclaganSturmfels15}}]\label{thm:stableIntersection}
  Let $\Sigma_1,\Sigma_2$ be two balanced polyhedral complex in $\RR^n$ of codimension $d,e$, respectively. If the stable intersection $\Sigma_1\wedge\Sigma_2$ is non-empty, then it is a balanced polyhedral complex of codimension $d+e$.
\end{theorem}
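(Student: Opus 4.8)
The plan is to prove the statement combinatorially, via the fan displacement rule, since $\Sigma_1$ and $\Sigma_2$ are arbitrary balanced polyhedral complexes and need not be realizable as tropicalizations of subvarieties of $(K^\ast)^n$, so the algebraic description in \cref{thm:stableTropicalization} does not apply. Balancedness is a condition checked locally around each codimension-one cell, so it suffices to analyse $\Sigma_1\wedge\Sigma_2$ in a neighbourhood of an arbitrary point $w$ of its support. Replacing $\Sigma_1$ and $\Sigma_2$ by their stars $\operatorname{star}_w(\Sigma_1)$ and $\operatorname{star}_w(\Sigma_2)$, which remain balanced as weighted fans, reduces every local question to the case of two balanced fans through the origin; I treat that case from now on.

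Next I would make the displacement rule precise. For a generic $v\in\RR^n$ and cones $\sigma\in\Sigma_1$, $\tau\in\Sigma_2$, the translate intersection $\sigma\cap(v+\tau)$ is non-empty for all sufficiently small $v$ exactly when $\sigma$ and $\tau$ meet transversally, i.e.\ $\Span(\sigma)+\Span(\tau)=\RR^n$; a non-transversal pair is separated by generic $v$. For maximal transversal pairs one has $\dim\sigma=n-d$ and $\dim\tau=n-e$, so $\dim(\sigma\cap\tau)=(n-d)+(n-e)-n=n-d-e$. I would therefore declare the cells of $\Sigma_1\wedge\Sigma_2$ to be the intersections $\sigma\cap\tau$ of transversal pairs, weighted by
\[ m_{\sigma\cap\tau}=m_\sigma\,m_\tau\,[\,\ZZ^n:N_\sigma+N_\tau\,],\qquad N_\sigma:=\Span(\sigma)\cap\ZZ^n,\ N_\tau:=\Span(\tau)\cap\ZZ^n, \]
the index being finite precisely by transversality. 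Two routine checks follow: that this collection and its weights are independent of the generic $v$, so that the stable intersection is well defined; and that, whenever it is non-empty, it is pure of codimension $d+e$, since every maximal cell comes from a maximal transversal pair while lower-dimensional transversal pairs contribute exactly the proper faces.

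The heart of the proof, and the step I expect to be the main obstacle, is balancedness. Fix a ridge $\rho$ of $\Sigma_1\wedge\Sigma_2$, a cell of dimension $n-d-e-1$, and pass to the quotient $\RR^n/\Span(\rho)$, in which $\rho$ collapses to the origin and each maximal cell $\gamma=\sigma\cap\tau\supseteq\rho$ becomes a ray with primitive generator $u_\gamma$; balancedness at $\rho$ is then the identity $\sum_{\gamma\supseteq\rho}m_\gamma u_\gamma=0$. Each facet $\rho$ of a maximal cell $\sigma\cap\tau$ is of one of two types: either $\rho=F\cap\tau$ with $F$ a facet of $\sigma$, so the $\Sigma_1$-cone drops a dimension, or $\rho=\sigma\cap G$ with $G$ a facet of $\tau$. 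I would group the maximal cells over $\rho$ accordingly: in a group with the $\Sigma_2$-cone $\tau$ held fixed, the generators $u_\gamma$ track the cones of $\Sigma_1$ around the fixed facet $F$, and the multiplicativity of the lattice index $[\,\ZZ^n:N_\sigma+N_\tau\,]$ as $\sigma$ is refined to $F$ lets me factor the partial balancing sum as a fixed transversal contribution from $\tau$ times the balancing sum of $\Sigma_1$ at $F$, which vanishes by hypothesis; the $\Sigma_2$-groups are handled symmetrically.

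The technical core is precisely this bookkeeping: one must verify that the primitive ray generators $u_\gamma$ and the indices $[\,\ZZ^n:N_\sigma+N_\tau\,]$ combine so that the two balancing conditions of the factors assemble, without spurious cross-terms, into the single balancing condition of the stable intersection; in particular one has to treat ridges that are simultaneously of both types and confirm that the two families of contributions cancel independently. A cleaner but heavier alternative would be to identify $\Sigma_1\wedge\Sigma_2$ with the Fulton--Sturmfels product of the Minkowski weights of $\Sigma_1$ and $\Sigma_2$ on a common complete refinement; since that product is again a Minkowski weight, balancedness would be automatic, at the cost of first developing the ring structure on Minkowski weights.
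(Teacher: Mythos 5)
First, a point of order: the paper does not prove this statement at all --- \cref{thm:stableIntersection} is quoted as background directly from \cite[Theorem 3.6.10]{MaclaganSturmfels15} --- so the only meaningful comparison is with the textbook's proof, and your proposal deviates from it in a way that is not merely cosmetic. Your overall strategy (reduce to stars, invoke a displacement rule, check balancing ridge by ridge) is the standard route, but your setup contains a genuine error. You assert that for all sufficiently small generic $v$ the translate intersection $\sigma\cap(v+\tau)$ is non-empty exactly when $\Span(\sigma)+\Span(\tau)=\RR^n$. This is false: take $\sigma=\RR_{\geq 0}e_1$ and $\tau=\RR_{\geq 0}e_2$ in $\RR^2$; their spans fill the plane, yet $\sigma\cap(v+\tau)\neq\emptyset$ only when $v_1\geq 0$ and $v_2\leq 0$, so the pair is separated by three out of four generic directions. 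Whether a transversal pair survives displacement depends on the direction of $v$, not only on spans. Consequently your weight assignment $m_{\sigma\cap\tau}=m_\sigma m_\tau\,[\ZZ^n:N_\sigma+N_\tau]$, attached to each transversal pair individually, is not the fan displacement rule and is not even well defined: distinct transversal pairs can share the same intersection cell. Concretely, stably intersect the standard tropical line in $\RR^2$ with itself: all three pairs of distinct rays meet exactly in the origin, your rule assigns the origin one weight per pair, and totalling them gives $3$, whereas the correct stable self-intersection multiplicity is $1$. The correct rule weights a maximal cell $\gamma$ by $\sum m_\sigma m_\tau\,[\ZZ^n:N_\sigma+N_\tau]$ summed over exactly those pairs with $\gamma\subseteq\sigma\cap\tau$ that survive the chosen displacement, i.e. $\sigma\cap(\tau+\epsilon v)\neq\emptyset$ for small $\epsilon>0$; which pairs occur depends on $v$, and the fact that the total does not is itself a non-trivial part of the theorem.

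This error is fatal precisely at the step you single out as the heart of the proof, because balancedness is the one assertion that is sensitive to the weights: with per-pair weights the ridge sums do not cancel, so the ``bookkeeping'' you defer is not technical but impossible as set up. The proof in \cite{MaclaganSturmfels15} avoids this bookkeeping altogether: for generic $v$ the translated intersection $|\Sigma_1|\cap(|\Sigma_2|+\epsilon v)$ is everywhere transverse, a transverse intersection of balanced complexes is balanced with the lattice-index weights (an elementary local computation), and the stable intersection is then exhibited as the limit of these balanced complexes as $\epsilon\to 0$; the summed multiplicities arise from cells colliding in the limit, balancing passes to the limit, and purity of codimension $d+e$ --- which you also list as routine --- comes out of the same limit description. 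Your alternative via Minkowski weights in the style of Fulton and Sturmfels is a legitimate, heavier route, but its product formula has the same sum-over-surviving-pairs structure, so it does not rescue the per-pair weights either.
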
\vspace{2mm}

Additionally, we will require the following theorem by Yu. Note that {\cite[Theorem 3]{Yu16}} is more general and also covers fields of finite characteristic. We will only need and state its specialisation to fields of characteristic $0$. In the theorem, ``general polynomials'' means polynomials whose coefficients lie in a fixed Zariski open, dense set of the coefficient space.\vspace{2mm}

\begin{theorem}[{\cite[Theorem 3]{Yu16}}]\label{thm:Yu}
  Let $A_1,\dots,A_k\subseteq\ZZ^n$, $k\leq n$, and $0\in A_i$ for all $i=1,\dots,k$. General polynomials in $K[x]$ with monomial supports $A_1,\dots,A_i$ generate a proper ideal whose radical is prime if and only if for every $\emptyset\neq J\subseteq [k]$ one of the following holds:
  \begin{enumerate}
  \item\label{enumitem:Yu1} $\dim \Span\bigcup_{j\in J}A_j>|J|$, or
  \item\label{enumitem:Yu2} $\dim \Span\bigcup_{j\in J}A_j=|J|$ and $\MV(\Conv(A_j)\mid j\in J)=1$.
  \end{enumerate}
\end{theorem}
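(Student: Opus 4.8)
The plan is to reinterpret the statement geometrically and prove the two implications separately. Fix general coefficients and write $f_1,\dots,f_k\in K[x^{\pm1}]$ for the corresponding Laurent polynomials with $\operatorname{supp}(f_i)=A_i$, and let $X=V(f_1,\dots,f_k)\subseteq(K^\ast)^n$ be their common zero locus in the torus. Since $K$ is algebraically closed, the ideal $\langle f_1,\dots,f_k\rangle$ is proper if and only if $X\neq\emptyset$, and its radical is prime if and only if $X$ is irreducible. Thus the theorem asserts that, for general coefficients, $X$ is a non-empty irreducible variety precisely when alternative \ref{enumitem:Yu1} or \ref{enumitem:Yu2} holds for every $\emptyset\neq J\subseteq[k]$. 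Because the span conditions are insensitive to the ambient torus, I would first quotient by the subtorus on which all $f_i$ are constant, writing $X=X'\times(K^\ast)^{n-m}$ with $m=\dim\Span\bigcup_i A_i$; this changes neither non-emptiness nor irreducibility, so whenever the condition holds I may assume $m=n\geq k$.

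For the dimension and non-emptiness I would argue tropically. Iterating \cref{thm:stableTropicalization} and \cref{thm:stableIntersection}, for general coefficients $\trop(X)$ equals the stable intersection $\bigwedge_{i=1}^k\trop(V(f_i))$, which is balanced of codimension $k$; hence $X$ is equidimensional of dimension $n-k$ whenever it is non-empty. The stable intersection is non-empty exactly when the relevant mixed volume is positive, and by Minkowski's criterion this positivity amounts to $\dim\Span\bigcup_{j\in J}A_j\geq|J|$ for all $J$, which is a direct consequence of \ref{enumitem:Yu1} and \ref{enumitem:Yu2}. So the combinatorial hypothesis already guarantees $X\neq\emptyset$ of the expected dimension, and the genuine content of the theorem is the equivalence with irreducibility.

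For necessity I would show that any violated $J$ breaks $X$ apart. Let $T_J$ be the quotient torus dual to $\Span_\ZZ\bigcup_{j\in J}A_j$, of dimension $m_J:=\dim\Span\bigcup_{j\in J}A_j$, through which the subsystem $(f_j)_{j\in J}$ factors. If $m_J<|J|$ this is an overdetermined general system on $T_J$ and has empty solution set, whence $X=\emptyset$. If $m_J=|J|$ and $\mu:=\MV(\Conv(A_j)\mid j\in J)\geq2$, then by Bernstein's theorem the subsystem has exactly $\mu$ simple solutions in $T_J$; the incidence variety $\{(c,y):y\text{ solves }(f_j)_{j\in J}\}$ is an affine bundle over $T_J$ (each equation is one non-trivial linear condition on its coefficient block, as $0\in A_j$), hence irreducible, so its degree-$\mu$ projection to coefficient space is a connected cover and monodromy permutes the $\mu$ solutions transitively. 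The fibres of $X$ over these $\mu$ points are therefore pairwise monodromy-conjugate, so they are all empty or all non-empty; in the latter case $X$ is disconnected. Either way $X$ is empty or reducible, establishing necessity.

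The substance of the theorem is the sufficiency of the condition for irreducibility, and this is where I expect the main difficulty. I would form the total incidence variety $\mathcal X=\{(c,x):f_{c,i}(x)=0\ \forall i\}$ over the coefficient space $C$; since $0\in A_i$, fixing $x$ imposes one non-trivial linear condition on each coefficient block, so $\mathcal X\to(K^\ast)^n$ is an affine bundle over an irreducible base and $\mathcal X$ is irreducible. The components of the geometric generic fibre of $\mathcal X\to C$ are then permuted transitively by monodromy, so a general $X$ is irreducible if and only if this fibre is irreducible, i.e. the monodromy admits no non-trivial block decomposition. The heuristic is that any such decomposition would, by the exact mechanism of the necessity argument, exhibit a subset $J$ with $m_J=|J|$ and $\MV>1$ via a projection of $X$ onto a non-trivial cover — which the hypothesis forbids. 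Converting this into a proof is the crux: I would induct on $k$, at each step adding $f_k$ to the irreducible variety $V(f_1,\dots,f_{k-1})$ and showing that a general member of the linear system with support $A_k$ cuts out an irreducible subvariety. This is a Bertini-type irreducibility statement for \emph{sparse} systems, where the span and mixed-volume hypotheses are precisely what force the restricted system to be sufficiently basepoint-free and not composed with a pencil. Controlling this sparse Bertini step — rather than the dimension count or the necessity direction — is the hard part.
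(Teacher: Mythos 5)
First, a point of order: the paper does not prove \cref{thm:Yu} at all. It is imported verbatim from \cite{Yu16} as background and used as a black box, and in fact only its ``if'' direction is ever invoked (in Case 2 of the proof of \cref{lem:stableConspiracy}, to conclude that general polynomials subject to conditions \eqref{enumitem:Yu1}/\eqref{enumitem:Yu2} cut out an irreducible variety, whence a connected tropicalization). So your attempt can only be measured against Yu's own proof, not against anything in this paper.

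Your proposal contains a genuine gap, and it is exactly the half of the equivalence that matters. The reformulation (proper $\Leftrightarrow$ non-empty zero set, prime radical $\Leftrightarrow$ irreducible, reduction to full-dimensional span) and the necessity direction are sound in outline: the overdetermined case $\dim\Span\bigcup_{j\in J}A_j<|J|$ kills non-emptiness by a dimension count on the incidence variety, and the monodromy argument for $\MV\geq 2$ correctly forces the general zero set to be empty or disconnected. (One loose end there: you must also handle $\dim\Span\bigcup_{j\in J}A_j=|J|$ with $\MV=0$, which condition \eqref{enumitem:Yu2} also excludes; this reduces to the overdetermined case because vanishing mixed volume produces a subset $J'\subseteq J$ with $\dim\Span\bigcup_{j\in J'}A_{j}<|J'|$.) But the sufficiency direction --- that conditions \eqref{enumitem:Yu1}/\eqref{enumitem:Yu2} for all $J$ force the radical to be prime --- is the entire substance of the theorem, and your text explicitly leaves it open: the ``sparse Bertini'' step you call ``the crux'' and ``the hard part'' is asserted as a heuristic (``any block decomposition would exhibit a bad $J$'') without any checkable mechanism. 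Making that precise --- ruling out that the linear system with support $A_k$ restricted to the inductively irreducible variety $V(f_1,\dots,f_{k-1})$ is composed with a pencil, controlling its base locus, and identifying the degenerate cases with the $\MV=1$ alternative --- is precisely what \cite{Yu16} does, via the monomial maps $x\mapsto(x^\alpha)_{\alpha\in A_i}$ and a Bertini-type irreducibility theorem. So what you have is the easy half, plus a plan for the hard half that plausibly parallels Yu's strategy but is not carried out; and the unproved half is the only one this paper relies on.
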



\pagebreak
\section{Main Theorem}

In this section, we prove \cref{thm:main}, beginning with two lemmas. In the first lemma, we consider affine subspaces as balanced polyhedral complexes consisting of a single element. We show that if an affine subspace has an empty stable intersection with a balanced polyhedral complex, then so do its translates, see \cref{fig:subspaceIntersection}.

\begin{figure}
  \centering
  \begin{tikzpicture}
    \node at (0,0)
    {
      \begin{tikzpicture}[x={(90:5mm)},y={(0:5mm)},z={(240:5mm)}]
        \draw[very thick]
        (0,0,0)++(0,-1.5,1.5) -- ++(0,-1.5,1.5)
        (0,0,0)++(0,-1.5,-1.5) -- ++(0,-1.5,-1.5)
        (0,0,0) -- (0,2.5,0)
        (0,5,0) -- ++(0,1.5,1.5)
        (0,5,0) -- ++(0,1.5,-1.5);
        \fill[blue!20,opacity=0.6] (-1.5,-1.5,-3) -- (-1.5,-1.5,3) -- (1.5,-1.5,3) -- (1.5,-1.5,-3) -- cycle;
        \fill[blue!20,opacity=0.6] (-1.5,2.5,-3) -- (-1.5,2.5,3) -- (1.5,2.5,3) -- (1.5,2.5,-3) -- cycle;
        \fill[blue!20,opacity=0.6] (-1.5,6.5,-3) -- (-1.5,6.5,3) -- (1.5,6.5,3) -- (1.5,6.5,-3) -- cycle;
        \draw[very thick] (0,2.5,0) -- (0,5,0)
        (0,0,0) -- ++(0,-1.5,1.5)
        (0,0,0) -- ++(0,-1.5,-1.5)
        (0,5,0)++(0,1.5,1.5) -- ++(0,1.5,1.5)
        (0,5,0)++(0,1.5,-1.5) -- ++(0,1.5,-1.5);
        \fill[blue!70!black] (0,-1.5,1.5) circle (2pt)
        (0,-1.5,-1.5) circle (2pt)
        (0,2.5,0) circle (2pt)
        (0,6.5,1.5) circle (2pt)
        (0,6.5,-1.5) circle (2pt);
        \fill
        (0,0,0) circle (2pt)
        (0,5,0) circle (2pt);
        \node[anchor=north west,blue!70!black] at (-1.5,-1.5,3) {$L'$};
        \node[anchor=north west,blue!70!black] at (-1.5,2.5,3) {$L$};
        \node[anchor=north west,blue!70!black] at (-1.5,6.5,3) {$L''$};
        \node[anchor=south] at (0,0.85,0) {$\Sigma$};
      \end{tikzpicture}
    };
    \node at (8,0)
    {
      \begin{tikzpicture}[x={(90:5mm)},y={(0:5mm)},z={(240:5mm)}]
        \fill[blue!20,opacity=0.6] (-1,-3,-3) -- (-1,-3,3) -- (-1,8,3) -- (-1,8,-3) -- cycle;
        \draw[very thick]
        (0,0,0) -- ++(0,-3,3)
        (0,0,0) -- ++(0,-3,-3)
        (0,0,0) -- (0,2.5,0)
        (0,5,0) -- ++(0,3,3)
        (0,5,0) -- ++(0,3,-3)
        (0,2.5,0) -- (0,5,0);
        \fill
        (0,0,0) circle (2pt)
        (0,5,0) circle (2pt);
        \fill[blue!20,opacity=0.6] (1,-3,-3) -- (1,-3,3) -- (1,8,3) -- (1,8,-3) -- cycle;
        \node[anchor=west,blue!70!black] at (-1,8,-3) {$L$};
        \node[anchor=west,blue!70!black] at (1,8,-3) {$L'$};
      \end{tikzpicture}
    };
  \end{tikzpicture}\vspace{-3mm}
  \caption{The stable intersections of a fixed balanced polyhedral complex $\Sigma$ with translates of an affine subspace are either all non-empty (left) or empty (right), but never both.}
  \label{fig:subspaceIntersection}
\end{figure}
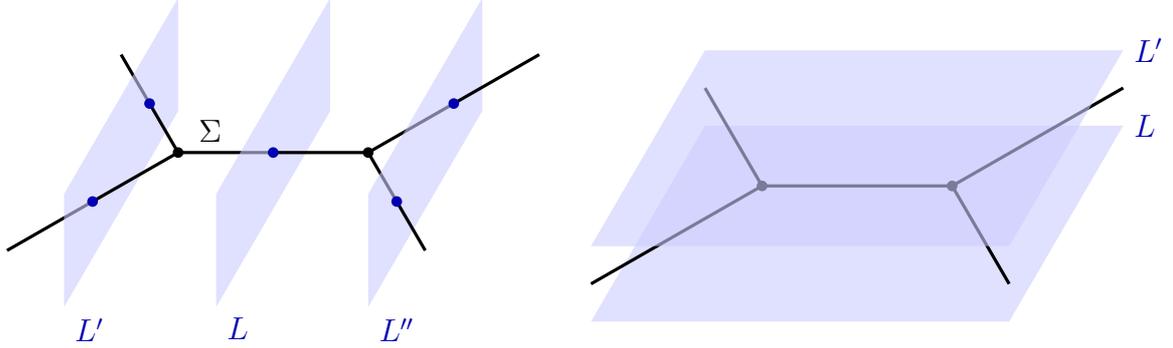

\begin{lemma}\label{lem:subspaceIntersection}
  Let $\Sigma$ be a balanced polyhedral complex in $\RR^n$. Let $L, L'$ be two parallel affine subspaces in $\RR^n$, i.e., $L=\mathrm{Ker}(A)+v$ and $L'=\mathrm{Ker}(A)+v'$ for some matrix $A\in\RR^{k\times n}$ and vectors $v,v'\in\RR^n$. Then
  \[ L\wedge\Sigma=\emptyset\quad\Longleftrightarrow\quad L'\wedge\Sigma=\emptyset. \]
\end{lemma}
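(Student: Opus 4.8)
The plan is to reduce everything to the linear part $W\coloneqq\mathrm{Ker}(A)$ and a covering argument for the projection modulo $W$. Write $r\coloneqq\mathrm{rank}(A)=\codim(W)$ and let $\pi\colon\RR^n\to\RR^n/W\cong\RR^r$ be the quotient map, so that $L=\pi^{-1}(\pi(v))$ and $L'=\pi^{-1}(\pi(v'))$ are two fibres of $\pi$. First I would rephrase nonemptiness of the stable intersection via transverse cells. By the fan displacement rule \cite{MaclaganSturmfels15}, a point $p$ lies in $|L\wedge\Sigma|$ if and only if $p\in L$ and $p$ lies in a cell $\sigma\in\Sigma$ with $\lin(\sigma)+W=\RR^n$; call such a $\sigma$ \emph{$W$-transverse}. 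Since any cell lies in a maximal one and $\lin(\,\cdot\,)$ only grows, $W$-transverse cells exist iff $W$-transverse maximal cells do, and $L$ meets a $W$-transverse cell iff it meets a $W$-transverse maximal one. Hence
\[ L\wedge\Sigma\neq\emptyset \iff \pi(v)\in Y, \qquad Y\coloneqq\bigcup\bigl\{\pi(\sigma) : \sigma\in\Sigma\text{ maximal},\ \lin(\sigma)+W=\RR^n\bigr\}, \]
and likewise $L'\wedge\Sigma\neq\emptyset\iff\pi(v')\in Y$. A $W$-transverse maximal cell has $\dim\pi(\sigma)=r$, so the pieces of $Y$ are full-dimensional; in particular $Y=\emptyset$ exactly when no $W$-transverse maximal cell exists (which happens, for instance, whenever $\dim\Sigma<r$).

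The whole lemma now follows from the single claim that $Y$ is either empty or all of $\RR^r$: in that case $\pi(v)\in Y\iff\pi(v')\in Y$ regardless of $v,v'$. So assume $Y\neq\emptyset$ and consider the weighted sheet count $N\colon\RR^r\to\ZZ_{\geq0}$ that assigns to a generic $y$ the sum of $m_\sigma$ times the index of $\pi|_{\lin(\sigma)}$ over all $W$-transverse maximal cells $\sigma$ with $y\in\pi(\sigma)$. This $N$ is the weight function of the degree-$r$ push-forward of $\Sigma$ along $\pi$. Balancing of $\Sigma$ at each codimension-one cell projects to a relation on the primitive normals of the adjacent maximal cells, and this relation is exactly what guarantees that $N$ does not jump across the walls $\pi(\tau)$ separating the chambers of $\RR^r$. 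Thus $N$ is globally constant; being positive on the nonempty open set $\bigcup_\sigma\Relint(\pi(\sigma))$, it is positive everywhere, and therefore the images $\pi(\sigma)$ cover all of $\RR^r$, i.e.\ $Y=\RR^r$. Equivalently, $\pi_*\Sigma$ is a balanced complex that is pure of dimension $r$ in $\RR^r$, so its support has no topological boundary (a facet on the boundary would be adjacent to a single maximal cell, contradicting balancing) and hence equals $\RR^r$.

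The main obstacle is precisely this covering step. It is tempting but false that a single $W$-transverse cell already projects onto all of $\RR^r$ (a bounded transverse cell clearly does not), so the emptiness-or-everything dichotomy for $Y$ genuinely needs the \emph{global} balancing of $\Sigma$, not a cell-by-cell argument; this is what ties the combinatorics of the statement to the balancing condition. One further point to handle with care is that $A$, and hence $W$, may be irrational: I would run the sheet-counting/balancing computation directly over $\RR$, using real volumes of $\pi|_{\lin(\sigma)}$ in place of lattice indices, so that neither the definition of $N$ nor its wall-crossing invariance relies on $W$ being rational. With $Y$ shown to be $\emptyset$ or $\RR^r$, and $Y$ manifestly independent of the chosen translate, the equivalence $L\wedge\Sigma=\emptyset\Leftrightarrow L'\wedge\Sigma=\emptyset$ is immediate.
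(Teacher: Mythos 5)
Your reduction of the lemma to the single claim that
\[ Y\;=\;\bigcup\bigl\{\pi(\sigma) \;:\; \sigma\in\Sigma\text{ maximal},\ \lin(\sigma)+W=\RR^n\bigr\} \]
is either empty or all of $\RR^r$ is correct and is a clean reformulation via the fan displacement rule; the claim itself is true. The gap is in your proof of it: the sheet-counting argument tacitly assumes $\dim\Sigma=r$, i.e.\ that $\Sigma$ has complementary dimension to $L$. The lemma must hold (and is applied in \cref{lem:stableConspiracy}) when $\dim\Sigma>\codim L$, and there your construction breaks at every step. If $\sigma$ is a $W$-transverse maximal cell with $\dim\sigma=d>r$, then $\pi|_{\lin(\sigma)}$ is a surjection with $(d-r)$-dimensional kernel: it has no index, no ``real volume'', and the fibre $\pi^{-1}(y)\cap\sigma$ is positive-dimensional, so $N$ is not a sheet count. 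Worse, if you define $N(y)$ anyway as the number of transverse maximal cells over $y$ (with any fixed positive weights), it is simply not locally constant: take $\Sigma=\RR^2$ with weight $1$, subdivided as the fan with rays $(1,0)$, $(-1,1)$, $(-1,-1)$, and let $\pi(x,y)=x$ (so $W$ is the $y$-axis, $r=1$, $d=2$). All three maximal cones are $W$-transverse, their images are $\RR$, $(-\infty,0]$, $\RR$, and $N(-1)=3\neq 2=N(1)$. The wall-crossing relation you invoke is the balancing of the push-forward of a cycle along a map that preserves its dimension; here the codimension-one cells of $\Sigma$ are collapsed by $\pi$, and no such relation survives. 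For the same reason the parenthetical alternative fails: $\pi_*\Sigma$ is a cycle of dimension $d$, which for $d>r$ cannot be a nonzero balanced complex in $\RR^r$ at all, so it is not ``pure of dimension $r$'' and gives no information about $Y$.

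So you need a different argument for the covering step whenever $\dim\Sigma>\codim L$. The paper sidesteps the problem by never projecting to a target of dimension larger than one: it writes $L=|H_1\wedge\dots\wedge H_r|$ as a stable intersection of affine hyperplanes, absorbs $H_2\wedge\dots\wedge H_r$ into $\Sigma$ using associativity of $\wedge$, and thereby reduces to the case where $L,L'$ are hyperplanes; then the image of $|\Sigma|$ in the line $L^\perp$ is the support of a balanced complex in $\RR^1$, hence a finite set of points or all of $\RR$, and the dichotomy is immediate without any weight bookkeeping. You could repair your route similarly, either by performing the same reduction to $r=1$ and replacing the sheet count by the unweighted statement that $\pi(|\Sigma|)\subseteq\RR$ is finite or everything (note $Y$ then differs from $\pi(|\Sigma|)$ only by finitely many points, and being a finite union of closed intervals it must equal $\RR$), or by proving directly that the closed set $Y$, which is a finite union of $r$-dimensional polyhedra whose complement in $\pi(|\Sigma|)$ has dimension $<r$, can have no topological boundary --- but that boundary argument is exactly the ``global balancing'' input you identify as the crux, and it still has to be supplied by something other than the constancy of $N$.
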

\begin{proof}
  Without loss of generality, we may assume that $L$ is a linear space, i.e., $0\in L$. Note that both $L$ and $L'$ can be written as the supports of stable intersections of hyperplanes, i.e., $L=|H_1\wedge\dots\wedge H_k|$ and $L'=|(H_1+v')\wedge H_2\wedge\dots\wedge H_k|$ for suitable hyperplanes $H_1,\dots,H_k$ and $v'\in\RR^n$. Since $H_1+v$ and $H_1+v'$ remain parallel affine subspaces, and $H_2\wedge\dots\wedge H_k\wedge\Sigma$ remains a balanced polyhedral complex, we may assume without loss of generality that both $L$ and $L'$ are affine hyperplanes.

  Consider the projection of $|\Sigma|$ to the one-dimensional orthogonal complement~$L^\perp$. The projection of $|\Sigma|$ remains the support of a balanced polyhedral complex, which means it is either a finite number of points or the entire complement.
  If the projection is a finite number of points, then $\Sigma$ is contained in a union of affine hyperplanes parallel to $L$ and $L'$, and we have $L\wedge\Sigma=\emptyset=L'\wedge\Sigma$. If the projection is the entire complement, then we have  $L\wedge\Sigma\neq\emptyset\neq L'\wedge\Sigma$.
\end{proof}

In the next lemma, we consider connected components of balanced polyhedral complexes as balanced polyhedral complexes. We use \cref{lem:subspaceIntersection} to prove a similar statement but on the connected components of a stable intersection of tropical hypersurfaces instead of the translates of an affine subspace.

\begin{lemma}\label{lem:stableConspiracy}
  Let $\Sigma_1,\dots,\Sigma_k,\Sigma_{k+1}$ be tropical hypersurfaces in $\RR^n$ with $\bigwedge_{i=1}^k\Sigma_i\neq\emptyset$. Then for any two connected components $\Gamma,\Gamma'\subseteq \bigwedge_{i=1}^k\Sigma_i$ we have
  \[ \Gamma\wedge\Sigma_{k+1}=\emptyset \quad\Longleftrightarrow\quad \Gamma'\wedge\Sigma_{k+1}=\emptyset. \]
\end{lemma}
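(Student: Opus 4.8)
The plan is to pass to an algebraic model, reduce the emptiness of each $\Gamma\wedge\Sigma_{k+1}$ to a property of a single variety, and then isolate the transfer between the two components as the essential difficulty. Throughout I may assume $k<n$, since for $k=n$ the complex $W\coloneqq\bigwedge_{i=1}^k\Sigma_i$ is zero-dimensional and $\Gamma\wedge\Sigma_{k+1}$ is empty for every component by dimension reasons.

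First I would realize the hypersurfaces algebraically. Each $\Sigma_i$ is the tropicalization of some $V(f_i)\subseteq(K^\ast)^n$ for a suitable Laurent polynomial $f_i$. Applying \cref{thm:stableTropicalization} repeatedly, there are valuation-$0$ torus elements $t_2,\dots,t_k$ with $|W|=\trop(Z)$ for $Z\coloneqq V(f_1)\cap t_2V(f_2)\cap\dots\cap t_kV(f_k)$, a variety of dimension $n-k$. Writing $Z=\bigcup_j Z_j$ for its irreducible components, each $\trop(Z_j)$ is connected by \cref{thm:StructureTheorem}, so every connected component of $|W|$ is the tropicalization of a union of irreducible components; in particular $\Gamma=\trop(Z_\Gamma)$ and $\Gamma'=\trop(Z_{\Gamma'})$ for two such unions $Z_\Gamma,Z_{\Gamma'}$.

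Next I would translate the emptiness condition into algebra. Applying \cref{thm:stableTropicalization} once more to the pair $(\Gamma,\Sigma_{k+1})$, with underlying varieties $(Z_\Gamma,V(f_{k+1}))$, gives $|\Gamma\wedge\Sigma_{k+1}|=\trop(Z_\Gamma\cap sV(f_{k+1}))$ for generic $s$, so that $\Gamma\wedge\Sigma_{k+1}=\emptyset$ if and only if a generic torus-translate of $V(f_{k+1})$ misses $Z_\Gamma$. Since $\dim\overline{Z_\Gamma\cdot V(f_{k+1})^{-1}}=\dim\bigl(\trop(Z_\Gamma)+\trop(V(f_{k+1}))\bigr)=\dim(|\Gamma|+|\Sigma_{k+1}|)$, this is equivalent to the Minkowski-sum condition $\dim(|\Gamma|+|\Sigma_{k+1}|)<n$. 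Computing this dimension cell by cell, and using that $\Sigma_{k+1}$ is a hypersurface, the condition becomes that the linear span of every maximal cell of $\Gamma$ lies in the lineality space $U$ of $\Sigma_{k+1}$, equivalently that $|\Gamma|$ is contained in a single coset of $U$. This is exactly the cell-wise use of \cref{lem:subspaceIntersection}: the emptiness of the stable intersection of a maximal cell's affine span with $\Sigma_{k+1}$ depends only on the cell's direction and not its position, and $\Gamma\wedge\Sigma_{k+1}=\emptyset$ precisely when every maximal cell of $\Gamma$ is \emph{inactive} in this sense.

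The remaining step, $\Gamma\wedge\Sigma_{k+1}=\emptyset\Rightarrow\Gamma'\wedge\Sigma_{k+1}=\emptyset$, is where I expect the main obstacle to lie, and it is where the statement ceases to be combinatorial. Balancing is a purely local condition and does not link distinct connected components, so the image of $W$ under the quotient $\pi_U\colon\RR^n\to\RR^n/U$ may be impure, and nothing purely combinatorial prevents one component from collapsing to a point under $\pi_U$ while another spreads out; this is consistent with the authors' remark that they could not argue combinatorially. The resolution must exploit that $\Gamma$ and $\Gamma'$ are components of one and the same $Z$, whose tropical structure is prescribed but whose coefficients and the $t_i$ are otherwise generic. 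The plan is to show that the irreducible components of $Z$ are interchangeable under this genericity, so that the discrete invariant $\dim\pi_U(Z_\Gamma)$ cannot differ between $Z_\Gamma$ and $Z_{\Gamma'}$; equivalently, that ``one component lies in a coset of $U$'' forces ``every component does''. Making this interchangeability precise is the crux, and I would establish it by realizing the components as conjugate under the monodromy of the family of generic intersections (the incidence space being irreducible), which forces activeness to be the same for $\Gamma$ and $\Gamma'$ and thereby closes the argument.
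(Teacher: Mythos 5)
Your reductions are sound: passing to an algebraic model $Z$ with $|\bigwedge_{i=1}^k\Sigma_i|=\trop(Z)$, grouping irreducible components of $Z$ by the connected components of their tropicalizations, and characterising $\Gamma\wedge\Sigma_{k+1}=\emptyset$ as ``$|\Gamma|$ lies in a single coset of the lineality space of $\Sigma_{k+1}$'' are all correct (the last step because the intersection of the affine spans of the maximal cells of a tropical hypersurface is exactly its lineality space). But the step you yourself identify as the crux --- that this coset property transfers from $Z_\Gamma$ to $Z_{\Gamma'}$ --- is announced rather than proved, and as written it is a genuine gap. To make the monodromy argument work you would need to establish at least three things that are not in the proposal: (i) that the Stein factorization of the incidence variety $\{(c,p): f_i(c)(p)=0\}$ over the coefficient space is irreducible, so that the Galois/monodromy action permutes the irreducible components of a \emph{sufficiently generic} fibre transitively; (ii) that the invariant ``dimension of the image under the monomial map dual to $\pi_U$'' is constant on a dense open subset of that Stein factorization, so that conjugate components share it; and (iii) that a coefficient vector can be chosen which simultaneously lies in all the relevant Zariski-open good loci \emph{and} in the (dense but not open) set of coefficients realising the prescribed tropicalizations $\Sigma_i$ and the prescribed stable intersection. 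Point (iii) is exactly the kind of delicate ``dense meets open'' argument the paper has to make explicitly, and none of (i)--(iii) is free; until they are carried out, the implication $\Gamma\wedge\Sigma_{k+1}=\emptyset\Rightarrow\Gamma'\wedge\Sigma_{k+1}=\emptyset$ is not established.

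For comparison, the paper avoids monodromy entirely by a dichotomy based on \cref{thm:Yu}: if the monomial supports satisfy Yu's conditions, then generic polynomials with those supports generate an ideal with prime radical, so by \cref{thm:StructureTheorem} and a careful application of \cref{thm:stableTropicalization} the stable intersection $\bigwedge_{i=1}^k\Sigma_i$ is itself \emph{connected} and the lemma is vacuous; if they fail, some subfamily forces $\bigwedge_{j\in J}\Sigma_j$ to be a union of parallel affine subspaces, and \cref{lem:subspaceIntersection} transfers emptiness between translates directly. Your observation that the transfer cannot be purely combinatorial is consistent with the paper, but the paper's resolution is to show that in the only case where several components can coexist they are forced to be parallel affine subspaces --- a much shorter route than conjugating components under monodromy. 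If you want to salvage your approach, the cleanest fix is to adopt this dichotomy: in the irreducible case your $Z$ has one component and there is nothing to transfer, and in the reducible case the components are translates of one another, which is precisely the situation \cref{lem:subspaceIntersection} was designed for.
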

\begin{proof}
  Let $A_1,\dots,A_k\subseteq\ZZ^n$ be monomial supports of polynomials with tropical hypersurfaces $\Sigma_1,\dots,\Sigma_k$ and $0\in A_i$ for all $i$. We distinguish between two cases:
  \begin{description}[leftmargin=*]
  \item[Case 1] If $A_1,\dots,A_k$ do not satisfy Conditions \eqref{enumitem:Yu1} or \eqref{enumitem:Yu2} of \cref{thm:Yu}, then there is a subset $J\subseteq [k]$ such that $\dim \Span (\bigcup_{j\in J} A_j)\leq |J|$.
    Note that the lineality space of $\bigwedge_{j\in J}\Sigma_j$ contains the orthogonal complement of $\Span (\bigcup_{j\in J} A_j)$. Therefore, the lineality space of $\bigwedge_{j\in J}\Sigma_j$ has codimension at most $|J|$. And since $\bigwedge_{j\in J}\Sigma_j\supseteq\bigwedge_{i=1}^k\Sigma_i \neq\emptyset$, \cref{thm:stableIntersection} implies that $\bigwedge_{j\in J}\Sigma_j$ has codimension exactly $|J|$. Hence, $\bigwedge_{j\in J}\Sigma_j$ consists of affine subspaces parallel to its lineality space. The claim follows by applying \cref{lem:subspaceIntersection} to $\Sigma\coloneqq\bigwedge_{i\in [k+1]\setminus J}\Sigma_i$ and the affine subspaces of $\bigwedge_{j\in J}\Sigma_j$.
  \item[Case 2] If $A_1,\dots,A_k$ satisfy Conditions \eqref{enumitem:Yu1} or \eqref{enumitem:Yu2} of \cref{thm:Yu}, consider
  \begin{itemize}
  \item $K\cdot x^A\coloneqq \bigoplus_{i=1}^k \bigoplus_{\alpha\in A_i} K\cdot x^{\alpha}$, the vector space of polynomial tuples $(f_1,\dots,f_k)$ where $f_i$ has monomial support $A_i$, and
  \item $K^{|A|}\coloneqq K^{|A_1\cup\dots\cup A_k|}$, their coefficient space.
  \end{itemize}
  In particular, any choice of coefficients $c=(c_{i,\alpha})_{i\in [k], \alpha\in A_i}\in K^{|A|}$ defines a tuple of polynomials $f(c)\coloneqq (f_i(c))_{i\in [k]}\in K\cdot x^A$ with $f_i(c)\coloneqq\sum_{\alpha\in A_i}c_{i,\alpha}x^\alpha$ and vice versa. By \cref{thm:Yu}, there is a Zariski open, dense subset $U\subseteq K^{|A|}$ such that any $c\in U$ yields an $f(c)$ whose entries generate an ideal whose radical is prime. By \cref{thm:StructureTheorem}, the tropicalization $\trop(V(f(c)))\coloneqq \trop(V(\langle f_1(c),\dots,f_k(c)\rangle))$ will be connected for any $c\in U$. We will now show that there is a $c_0\in U$ such that $\trop(V(f(c_0)))=|\bigwedge_{i=1}^k\Sigma_i|$, so that the claim holds trivially as there is only one connected component.

  Pick any $c\in U$. By \cref{thm:stableTropicalization}, there is a Zariski dense $S\subseteq (K^n)^{k-1}$ with $\trop(V(f_1(c))\cap t_2V(f_2(c))\cap\cdots\cap t_kV(f_k(c)))=|\bigwedge_{i=1}^k\Sigma_i|$ for all ${(t_2,\dots,t_k)\in S}$. For any $t=(t_2,\dots,t_k)\in (K^n)^{k-1}$, let $t^{-1}\cdot c\in K^{|A|}$ denote the coefficients of the polynomials $f_1(t^{-1}\cdot c)=\sum_{\alpha\in A_1}c_{1,\alpha} x^\alpha$ and $f_i(t^{-1}\cdot c)=\sum_{\alpha\in A_i}c_{i,\alpha} t_i^{-\alpha}x^\alpha$ for $i>1$, so that $V(f_1(t^{-1}c))=V(f_1(c))$ and $V(f_i(t^{-1}c))=t_i V(f_i(c))$ for $i>1$. Consider the polynomial map
  \[ \varphi\colon\quad (K^n)^{k-1}\rightarrow K^{|A|}, \qquad t\mapsto t^{-1}\cdot c. \]
  The preimage $\varphi^{-1}(U)$ is a Zariski open set, and since $\varphi(1,\dots,1)=c\in U$ it is non-empty. Thus $\varphi^{-1}(U)$ has to intersect the Zariski dense set $S$, and any point in the image of the intersection gives us the desired $c_0\in U$. \qedhere
  \end{description}
\end{proof}

Using \cref{lem:stableConspiracy} we can now prove:

\pgfdeclareverticalshading{howTheHellDoesThisWork}{4cm}{color(0cm)=(white);color(2.95cm)=(white);color(3.25cm)=(blue!20);color(6cm)=(white)}
\begin{figure}[t]
  \centering
  \begin{tikzpicture}
    \node (left)
    {
      \begin{tikzpicture}[x={(240:1cm)},y={(0:1cm)},z={(90:1cm)},shading angle from/.style args={line from #1 to #2}{insert path={let \p1=($#2-#1$),\n1={atan2(\y1,\x1)} in},shading angle=\n1}]
        \fill[shading=howTheHellDoesThisWork, shading angle from={line from (-1.5,0,0) to (1.5,0,0)}]  (-2.625,-0.75,0) -- (-1.5,0,0) -- (1.5,0,0) -- (2.625,-0.75,0) -- cycle;
        \draw[very thick, white] (2.625,-0.75,0) -- (-2.625,-0.75,0);
        \draw[thick, blue!40!black] (-3,-1,0) -- (-1.5,0,0) -- (1.5,0,0) -- (3,-1,0);
        \node[anchor=north east, blue!40!black] at (-3,-1,0) {$C$};

        \draw[red!70!black,thick]
        (0,1,0) -- node[above] {$\gamma$} (0,2.25,0)
        (0,0,0) -- (0,-1,0)
        (0,-1,0) -- (1,-2,0)
        (0,-1,0) -- (-1,-2,0)
        (1,-2,0) -- (2,-2,0)
        (1,-2,0) -- (1,-3,0)
        (-1,-2,0) -- node[above] {$\Gamma$} (-1,-3,0)
        (-1,-2,0) -- (-2,-2,0);

        \draw[violet!70!black,->,very thick] (0,0,0) -- (0,1,0);
        \fill[violet!70!black] (0,0,0) circle (3pt);
        \node[anchor=north west,violet!70!black,xshift=-1mm,yshift=-2mm] at (0,0,0) {$w$};
        \node[anchor=north,violet!70!black,xshift=-1mm,yshift=-2mm] at (0,1,0) {$u$};
      \end{tikzpicture}
    };
    \node (right) at (7,0)
    {
      \begin{tikzpicture}[x={(240:1cm)},y={(0:1cm)},z={(90:1cm)}]
        \phantom
        {
          \draw[thick, blue!40!black] (-3,-1,0) -- (-1.5,0,0) -- (1.5,0,0) -- (3,-1,0);
        }
        \draw[blue!40!black,fill=blue!10] (-1.5,-2,0) -- (-1.5,0,0) -- (1.5,0,0) -- (1.5,-2,0) -- cycle;
        \draw[blue!40!black,fill=blue!10] (-1.5,0,2) -- (-1.5,0,0) -- (1.5,0,0) -- (1.5,0,2) -- cycle;
        \draw[blue!40!black,dashed] (-1.5,0,0) -- ++(0,-1.2,0);
        \draw[blue!40!black,fill=blue!10] (-1.5,1.25,-1.25) -- (-1.5,0,0) -- (1.5,0,0) -- (1.5,1.25,-1.25) -- cycle;

        \draw[violet!70!black,->,very thick] (0,0,0) -- (0,1,0);
        \fill[violet!70!black] (0,0,0) circle (3pt);
        \node[anchor=south east,violet!70!black,xshift=1mm,yshift=1mm] at (0,0,0) {$w$};
        \node[anchor=south,violet!70!black,xshift=-1mm,yshift=1mm] at (0,1,0) {$u$};
        \draw[blue!40!black,decorate,decoration={brace,amplitude=5pt,mirror}] (0,2.25,-2.5) -- node[anchor=west,xshift=2mm] {$\Sigma_{i_0}$} (0,2.25,3.25);
        \draw[blue!40!black,decorate,decoration={brace,amplitude=5pt,mirror}] (1.65,0,0) -- node[anchor=north east,xshift=-0.5mm,yshift=-0.5mm] {$\sigma$} (1.65,1.25,-1.25);
      \end{tikzpicture}
    };
  \end{tikzpicture}\vspace{-2mm}
  \caption{Illustrations for the proof of \cref{thm:main}. $C$ is a connected component of $\bigcap_{i=1}^k|\Sigma_i|$, $\Gamma$ is a connected component of $\bigwedge_{j\in  J}\Sigma_j$ assumed to be not contained in $C$, and $\Sigma_{i_0}$ is a tropical hypersurface that contributes to $\Gamma$ not being contained in $C$.}
  \label{fig:proofIllustration}
\end{figure}

\begin{theorem}\label{thm:main}
  Let $\Sigma_1,\dots\Sigma_k$ be tropical hypersurfaces in $\RR^n$ with a non-empty stable intersection $\bigwedge_{i=1}^k\Sigma_i\neq\emptyset$. Then every connected component of their intersection $\bigcap_{i=1}^k|\Sigma_i|$ contains a point in the support of their stable intersection $|\bigwedge_{i=1}^k\Sigma_i|$.
\end{theorem}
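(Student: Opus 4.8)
The plan is to argue by contradiction, reducing the global statement to a purely local one at a single point via a maximality argument over subsets of $[k]$, and then to resolve the local statement by a transversality/movement argument powered by \cref{lem:stableConspiracy}.

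First I would fix a connected component $C$ of $\bigcap_{i=1}^k|\Sigma_i|$ and assume, for contradiction, that $C\cap|\bigwedge_{i=1}^k\Sigma_i|=\emptyset$. Call a subset $J\subseteq[k]$ \emph{active} if some connected component of $\bigwedge_{j\in J}\Sigma_j$ meets $C$. The empty set is active, since $\bigwedge_{j\in\emptyset}\Sigma_j=\RR^n$ meets the non-empty $C$; on the other hand $[k]$ is \emph{not} active, because a connected component of $\bigwedge_{i=1}^k\Sigma_i$ meeting $C$ would, being a connected subset of $\bigcap_{i=1}^k|\Sigma_i|$, be contained in the component $C$, contradicting the assumption. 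Hence I may pick an active $J$ of maximal size, necessarily with $J\neq[k]$, together with a component $\Gamma$ of $\bigwedge_{j\in J}\Sigma_j$, a point $w\in\Gamma\cap C$, and an index $i_0\in[k]\setminus J$.

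Next I would invoke \cref{lem:stableConspiracy}. Since removing hypersurfaces only enlarges a stable intersection, $\bigwedge_{j\in J\cup\{i_0\}}\Sigma_j\supseteq\bigwedge_{i=1}^k\Sigma_i\neq\emptyset$, so some component of $\bigwedge_{j\in J}\Sigma_j$ has non-empty stable intersection with $\Sigma_{i_0}$; by \cref{lem:stableConspiracy} \emph{every} component does, and in particular $\Gamma\wedge\Sigma_{i_0}\neq\emptyset$. As $\Gamma\wedge\Sigma_{i_0}\subseteq\bigwedge_{j\in J\cup\{i_0\}}\Sigma_j$ and $J$ is maximal active, no component of $\bigwedge_{j\in J\cup\{i_0\}}\Sigma_j$ meets $C$, so $\Gamma\wedge\Sigma_{i_0}$ is disjoint from $C$. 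The goal is thereby reduced to a contradiction: starting from $w\in\Gamma\cap C$, I must produce a point of $\Gamma\wedge\Sigma_{i_0}$ inside $C$.

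This last, local, step is where I expect the real difficulty to lie, and it is precisely the situation depicted in \cref{fig:proofIllustration}. The idea is to pass to the star fans at $w$, replacing $\Gamma,\Sigma_{i_0}$ and $C$ by $\mathrm{Star}_w\Gamma$, $\mathrm{Star}_w\Sigma_{i_0}$ and the tangent cone of $C$, so that $\Gamma$ and $\Sigma_{i_0}$ become balanced fans and $C$ a polyhedral cone, all through the origin. Because $w\in C$ it satisfies every $\Sigma_i$, and because $w\in\Gamma$ it satisfies those indexed by $J$; the non-emptiness $\Gamma\wedge\Sigma_{i_0}\neq\emptyset$ guarantees that somewhere on $\Gamma$ a maximal cell meets a maximal cell of $\Sigma_{i_0}$ transversally. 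The plan is then to choose a direction $v=u-w$ at $w$ that stays inside every $|\Sigma_i|$—hence inside $C$, since any path in $\bigcap_{i=1}^k|\Sigma_i|$ issuing from $w$ remains in the component $C$—while moving $w$ along $\Gamma$ into the relative interior of a maximal cell $\sigma$ of $\Sigma_{i_0}$. At such a point the local intersection of $\Gamma$ with the affine hyperplane $\aff(\sigma)$ is transverse, hence equals $\Gamma\wedge\Sigma_{i_0}$, producing the desired point of $\Gamma\wedge\Sigma_{i_0}$ in $C$ and the contradiction. The crux, requiring the most care, is to show such a direction exists: one must rule out that $\Gamma$ can attain a transverse position relative to $\Sigma_{i_0}$ only after leaving $\bigcap_{i\in[k]\setminus J}|\Sigma_i|$. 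Here I would exploit the balancing of each $\Sigma_i$ and of $\Gamma$ to steer the movement into a generic stratum of $\Sigma_{i_0}$ without violating any of the remaining hypersurfaces.
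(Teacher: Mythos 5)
Your skeleton matches the paper's: argue by contradiction, take a maximal $J\subsetneq[k]$ whose stable intersection still meets $C$, fix the component $\Gamma$ of $\bigwedge_{j\in J}\Sigma_j$ meeting $C$, and use \cref{lem:stableConspiracy} to control $\Gamma\wedge\Sigma_{i_0}$ for $i_0\notin J$. Up to the point where you conclude that $\Gamma\wedge\Sigma_{i_0}$ is non-empty but disjoint from $C$, the argument is sound (and in the sub-case $\Gamma\subseteq C$ it already finishes the proof, since then $\Gamma\wedge\Sigma_{i_0}\subseteq|\Gamma|\cap|\Sigma_{i_0}|\subseteq C$; this is exactly the paper's first case, phrased contrapositively).

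The genuine gap is the local step when $\Gamma\not\subseteq C$, and you have flagged it yourself: you propose to move from $w\in\Gamma\cap C$ along $\Gamma$, staying inside every $|\Sigma_i|$, until $\Gamma$ meets a maximal cell of $\Sigma_{i_0}$ transversally, and you admit you do not know how to produce such a direction. No such direction need exist in any obvious sense, and balancing alone does not supply one; this is where the proof actually lives. The paper's resolution is the opposite move: pick $w\in\Gamma\cap C$ and a direction $u$ along which $\Gamma$ \emph{leaves} $C$, i.e.\ $w+\varepsilon u\in\Gamma$ but $w+\varepsilon u\notin|\Sigma_{i_0}|$ for some $i_0\notin J$. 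Then the cell $\gamma$ of $\Gamma$ containing the segment and a facet $\sigma$ of $\Sigma_{i_0}$ through $w$ on the boundary of the complementary region containing $w+\varepsilon u$ automatically satisfy $\dim(\gamma+\sigma)=n$, because $\gamma$ contains a direction pointing off the hyperplane spanned by $\sigma$. By the definition of stable intersection this forces $w\in\gamma\cap\sigma\subseteq|\Gamma\wedge\Sigma_{i_0}|$, so the contradiction point is $w$ itself --- no motion, genericity, or star-fan reduction is needed. In short: the transverse crossing you are searching for inside $C$ is manufactured for free at the exit point of $\Gamma$ from $C$; without that observation (or a substitute for it) your argument does not close.
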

\begin{proof}
  Let $C$ be a connected component of $\bigcap_{i=1}^k|\Sigma_i|$ and assume that $C$ contains no point of the stable intersection $\bigwedge_{i=1}^k\Sigma_i$. As $C$ contains a point of the stable intersection $\bigwedge_{j'\in \{j\}}\Sigma_{j'}=\Sigma_j$ for any singleton $\{j\}\subseteq [n]$, there is a maximal subset $J\subsetneq [k]$ such that $C$ contains a point of the stable intersection $\bigwedge_{j\in J}\Sigma_j$. Let $\Gamma\subseteq|\bigwedge_{j\in J}\Sigma_j|$ be the connected component containing said point. Note that $\Gamma$ is positive dimensional by \cref{thm:stableIntersection}.

  If $\Gamma$ is completely contained in $C$, then $\Gamma\wedge\Sigma_i=\emptyset$ for any $i\notin J$ due to the maximality of $J$. \cref{lem:stableConspiracy} then implies $\bigwedge_{i=1}^k \Sigma_i=\emptyset$, contradicting the assumptions.

  If $\Gamma$ is not completely contained in $C$, then, then there is a point $w\in C\cap\Gamma$ and a direction $u\in\RR^n$ such that $w+\varepsilon\cdot u\in\Gamma$ but $w+\varepsilon\cdot u\notin C$ for $\varepsilon>0$ sufficiently small, see \cref{fig:proofIllustration} left. Since $w+\varepsilon\cdot u\notin C$, we also have $w+\varepsilon\cdot u\notin\bigcap_{i=1}^k|\Sigma_i|$. But because $w+\varepsilon\cdot u\in \bigcap_{j\in J}|\Sigma_j|$, there must be an $i_0\notin J$ such that $w+\varepsilon\cdot u\notin|\Sigma_i|$. Let $\gamma\in\Gamma$ be a polyhedron containing $w+\varepsilon\cdot u$. Let $\sigma\in\Sigma_{i_0}$ be a maximal polyhedron on the boundary of the region of $\RR^n\setminus|\Sigma_{i_0}|$ containing $w+\varepsilon\cdot u$, see \cref{fig:proofIllustration} right. Then  $\dim(\gamma+\sigma)=n$ and hence $w\in\gamma\cap\sigma\in\Gamma\wedge\Sigma_{i_0}$ by the definition of stable intersection. As $w\in C$, this contradicts that $J\subseteq[n]$ is maximal.
\end{proof}


\section{Open questions}
Recall that our approach to prove \cref{thm:main} relies on the fact that each $\Sigma_i$ is the tropicalization of an algebraic variety and on dimension arguments to show (non-)emptiness. Consequently, it has two key limitations:

First, it cannot deal with balanced polyhedral complexes which do not arise as tropicalizations of algebraic varieties. Such balanced polyhedral complexes are known to exist. In fact, balanced polyhedral complexes that are tropicalizations of algebraic varieties exhibit particularly nice properties such as higher connectivity \cite{MaclaganYu21}. Hence it is not clear whether \cref{thm:main} generalises to arbitrary balanced polyhedral complexes:

\begin{question}
  Let $\Sigma_1,\dots,\Sigma_k$ be balanced polyhedral complexes in $\RR^n$ with a non-empty stable intersection $\bigwedge_{i=1}^k\Sigma_i\neq\emptyset$. Does every connected component of their intersection $\bigcap_{i=1}^n|\Sigma_i|$ contain a point of their stable intersection $|\bigwedge_{i=1}^k\Sigma_i|$?
\end{question}

Second, it cannot predict where the stable intersection points lie. Recall that the original motivation was to find a way to identify a point on each connected component of an intersection of tropical hypersurfaces. In case the number of tropical hypersurfaces in $\RR^n$ exceeds $n$, their stable intersection is empty by \cref{thm:stableIntersection}. Hence their stable intersection does not provide an easy way to construct such points. Instead, a natural alternative are the stable intersections of any $n$ tropiocal hypersurfaces, see \cref{fig:stableIntersection2}. However, since our techniques do not give any information on where those stable intersection points lie, we do not know where they lie on the intersection of all hypersurfaces:

\begin{question}
  Let $\Sigma_1,\dots,\Sigma_k$ be tropical hypersurfaces in $\RR^n$ with $k>n$ and non-empty stable intersections $\bigwedge_{j\in J}\Sigma_j\neq\emptyset$ for all $J\in\binom{[k]}{n}$. Does every connected component of their intersection $\bigcap_{i=1}^k|\Sigma_i|$ contain a point of a stable intersection $|\bigwedge_{j\in J}\Sigma_j|$ for some $J\in\binom{[k]}{n}$?
\end{question}

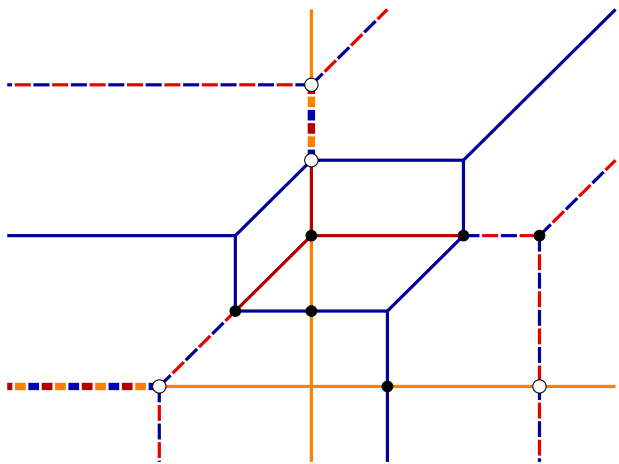
\begin{figure}
  \centering
  \begin{tikzpicture}
    \draw[blue!60!black,very thick]
    (1,1) -- (3,1)
    (1,1) -- (1,2)
    (3,1) -- ++(0,-2)
    (3,1) -- (4,2)
    (1,2) -- ++(-3,0)
    (1,2) -- (2,3)
    (4,2) -- (4,3)
    (2,3) -- (4,3)
    (4,3) -- ++(2,2);
    \draw[red!70!black,very thick]
    (2,2) -- ++(-1,-1)
    (2,2) -- ++(2,0)
    (2,2) -- ++(0,1);
    \draw[orange,very thick]
    (0,0) -- (6,0)
    (2,5) -- (2,4)
    (2,2) -- (2,-1);
    \draw[red!90!black,very thick,dash pattern= on 6pt off 8pt,dash phase=7pt]
    (0,0) -- ++(0,-1)
    (0,0) -- (1,1)
    (4,2) -- (5,2)
    (2,4) -- ++(-4,0)
    (2,4) -- ++(1,1)
    (5,2) -- ++(0,-3)
    (5,2) -- ++(1,1);
    \draw[blue!60!black,very thick,dash pattern= on 6pt off 8pt]
    (0,0) -- ++(0,-1)
    (0,0) -- (1,1)
    (4,2) -- (5,2)
    (2,4) -- ++(-4,0)
    (2,4) -- ++(1,1)
    (5,2) -- ++(0,-3)
    (5,2) -- ++(1,1);
    \draw[blue!70!black,line width=2.75pt,dash pattern= on 4pt off 11pt,dash phase=0pt]
    (0,0) -- ++(-2,0)
    (2,3) -- (2,4);
    \draw[red!70!black,line width=2.75pt,dash pattern= on 4pt off 11pt,dash phase=5pt]
    (0,0) -- ++(-2,0)
    (2,3) -- (2,4);
    \draw[orange,line width=2.75pt,dash pattern= on 4pt off 11pt,dash phase=10pt]
    (0,0) -- ++(-2,0)
    (2,3) -- (2,4);
    \draw[fill=black]
    (1,1) circle (2pt)
    (2,1) circle (2pt)
    (2,2) circle (2pt)
    (3,0) circle (2pt)
    (4,2) circle (2pt)
    (5,2) circle (2pt);
    \draw[fill=white]
    (0,0) circle (2.5pt)
    (2,3) circle (2.5pt)
    (2,4) circle (2.5pt)
    (5,0) circle (2.5pt);
  \end{tikzpicture}\vspace{-2mm}
  \caption{Three tropical plane curves and the stable intersection points of any two curves thereof. Stable intersection points which lie in the intersection of all three curves are highlighted in white.}
  \label{fig:stableIntersection2}
\end{figure}



\renewcommand{\emph}[1]{\textit{\textcolor{red}{#1}}}
\renewcommand{\emph}[1]{\textit{#1}}
\renewcommand*{\bibfont}{\small}
\printbibliography

\end{document}